\newtheorem{theorem}{Theorem}
\newtheorem{lemma}[theorem]{Lemma}
\newtheorem{conjecture}[theorem]{Conjecture}
\begin{document}
\onehalfspace

\title{Leaf to leaf path lengths in trees of given degree sequence}
\author{Dieter Rautenbach \and Johannes Scherer \and Florian Werner}
\date{}
\maketitle
\begin{center}
{\small 
Institute of Optimization and Operations Research, Ulm University, Ulm, Germany\\
\texttt{$\{$dieter.rautenbach,johannes-1.scherer,florian.werner$\}$@uni-ulm.de}
}
\end{center}

\begin{abstract}
For a tree $T$, 
let $lp(T)$ be the number of different lengths of leaf to leaf paths in $T$.
For a degree sequence $s$ of a tree,
let ${\rm rad}(s)$
be the minimum radius of a tree with degree sequence $s$.
Recently, 
Di Braccio, Katsamaktsis, Ma, Malekshahian, and Zhao
provided a lower bound 
on $lp(T)$ in terms of the number of leaves 
and the maximum degree of $T$,
answering a related question posed by Narins, Pokrovskiy, and Szabó.
Here we show $lp(T)\geq {\rm rad}(s)-\log_2\left({\rm rad}(s)\right)$
for a tree $T$ with no vertex of degree $2$ and degree sequence $s$,
and discuss possible improvements and variants.\\[3mm]
{\bf Keywords:} leaf to leaf path length
\end{abstract}

\section{Introduction}

Let $T$ be a tree.
A vertex of degree at most $1$ in $T$ is a leaf of $T$.
A path in $T$ between leaves of $T$ is a {\it leaf to leaf path} in $T$.
Let $lp(T)$ denote the number of different lengths of leaf to leaf paths in $T$.
Let ${\rm rad}(T)$ and ${\rm diam}(T)$ denote 
the radius and the diameter of $T$, respectively.

For the construction of a counterexample 
to a conjecture of Erd\H{o}s, Faudree, Gyárfás, and Schelp \cite{erfagysc}
concerning cycle lengths,
Narins, Pokrovskiy, and Szabó \cite{naposz}
constructed trees with vertices of degrees $1$ and $3$ only
that avoid leaf to leaf paths of certain lengths.
Answering one of the related questions posed by Narins et al.~in \cite{naposz},
Di Braccio et al.~\cite{brkama} proved the following.

\begin{theorem}[Di Braccio et al.~\cite{brkama}]\label{theorem1}
If $T$ is a tree of maximum degree $\Delta$ at least $3$ with $\ell$ leaves,
then $$lp(T)\geq \log_{\Delta-1}((\Delta-2)\ell).$$
\end{theorem}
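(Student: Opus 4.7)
Let $k=lp(T)$, $\alpha=\Delta-1$, and let $S$ denote the set of leaf-to-leaf path lengths, with $|S|=k$. The inequality to prove is equivalent to $\ell\le\alpha^{k}/(\alpha-1)$, and I plan to obtain it by rooting $T$ at a leaf and bounding the leaves depth-by-depth.

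\paragraph{Depth constraint after rooting.} Fix any leaf $u$ of $T$ and root $T$ at $u$. For each leaf $v\ne u$, the $u$--$v$ path is itself a leaf-to-leaf path, so $d_T(u,v)\in S$, and the $\ell-1$ leaves other than $u$ sit at no more than $k$ distinct depths $d_1<\cdots<d_m$ with $m\le k$. Let $V_d$ and $L_d$ count vertices and $T$-leaves at depth $d$ in the rooted tree. Since $u$ has degree $1$ in $T$ it has a unique child, so $V_1=1$, and each non-root internal vertex has at most $\alpha$ children, yielding $V_{d+1}\le\alpha(V_d-L_d)$ for $d\ge 1$. Telescoping across the gaps between consecutive leaf depths (where $L_d=0$) gives $V_{d_1}\le\alpha^{d_1-1}$, $V_{d_{i+1}}\le\alpha^{d_{i+1}-d_i}(V_{d_i}-L_{d_i})$ for $i<m$, and $V_{d_m}=L_{d_m}$ (no vertex can sit below the deepest leaf depth). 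A short induction turns this into the weight inequality $\sum_{i=1}^{m}L_{d_i}\alpha^{-d_i}\le\alpha^{-1}$.

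\paragraph{From the weight bound to the leaf count.} The weight inequality on its own bounds $\ell$ only in terms of the deepest depth $d_m$, not in terms of $k$. To get a bound in terms of $k$ alone I would also exploit the pairwise distances among the leaves other than $u$: for two leaves $v_1,v_2$ at a common depth $d_i$, their distance $2(d_i-h)\in S$ (where $h$ is the depth of their least common ancestor) forces the admissible LCA depths at each level $d_i$ to lie in a set of size at most $k$. The ``LCA tree'' of the leaves at depth $d_i$ is therefore a rooted tree with branching factor $\le\alpha$ whose height is controlled by the number of usable LCA-levels, and a level-by-level argument should deliver the per-depth bound $L_{d_i}\le\alpha^{i-1}$. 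Summing over $i$ then gives $\ell-1=\sum_{i=1}^{m}L_{d_i}\le\sum_{i=1}^{k}\alpha^{i-1}=(\alpha^k-1)/(\alpha-1)$, hence $\ell\le\alpha^{k}/(\alpha-1)$.

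\paragraph{Main obstacle.} The principal difficulty lies in the second step: the weight inequality $\sum L_{d_i}\alpha^{-d_i}\le\alpha^{-1}$ coming from rooting at one leaf depends on $d_m$ and is by itself too weak, so one has to combine it with the global LCA constraint above to upgrade to a per-depth bound $L_{d_i}\le\alpha^{i-1}$ that depends only on $k$. Making the interaction between many different depths quantitative---ensuring that the $k$ admissible LCA-levels at one depth genuinely cost ``branching budget'' in the sense of the recurrence, rather than being accounted for twice---is where the argument needs the most care.
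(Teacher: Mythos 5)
The paper does not prove Theorem \ref{theorem1} itself --- it is quoted from \cite{brkama} --- so your attempt can only be judged on its own terms, and it has a genuine gap. The first step is fine: rooting at a leaf $u$, the depths of the remaining leaves lie in $S$, and the recursion $V_{d+1}\le\alpha(V_d-L_d)$ does give the Kraft-type inequality $\sum_i L_{d_i}\alpha^{-d_i}\le\alpha^{-1}$. The problem is the second step. The per-depth bound $L_{d_i}\le\alpha^{i-1}$ that you aim for is simply false: for the star $K_{1,\Delta}$ rooted at one of its leaves, all $\Delta-1=\alpha$ remaining leaves sit at the unique depth $d_1=2$, so $L_{d_1}=\alpha>1=\alpha^{0}$. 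More importantly, the mechanism you propose cannot deliver a bound of this shape. Restricting the admissible LCA depths of two leaves at depth $d_i$ to values $h$ with $2(d_i-h)\in S$ bounds the number of branching levels in the LCA tree of those leaves by roughly $k-1$, hence gives only $L_{d_i}\le\alpha^{k-1}$, uniformly in $i$; there is no reason why only $i-1$ (or $i$) of the $k$ lengths in $S$ should be usable at the $i$-th smallest depth, since $S$ may contain many even values below $2d_i-2$ that do not occur as depths of leaves as seen from $u$. Summing $\alpha^{k-1}$ over the up to $k-1$ depths yields only $\ell\lesssim k\alpha^{k-1}$, weaker than the required $\ell\le\alpha^{k}/(\alpha-1)$ by a factor of order $k$. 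You correctly flag this as the main obstacle, but no argument closing it is given, so the proof is incomplete at exactly its crucial point.

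A smaller issue: even granting $L_{d_i}\le\alpha^{i-1}$ and $m\le k$, you would only obtain $\ell\le 1+(\alpha^{k}-1)/(\alpha-1)$, which exceeds the target $\alpha^{k}/(\alpha-1)$ whenever $\Delta\ge 4$; you would need to use that $0\in S$ (the trivial path is counted by $lp$), so that the non-root leaves occupy at most $k-1$ distinct depths. For comparison, the machinery this paper develops for Theorem \ref{theorem2} attacks the problem quite differently: Lemma \ref{lemma1} converts many distinct leaf depths into many path lengths via a sumset argument at the lowest common ancestor, and Lemma \ref{lemma3} handles many equal-depth leaves by induction on the LCA decomposition; neither relies on a per-depth leaf count of the kind your sketch requires.
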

Our goal is to generalize Theorem \ref{theorem1} 
in such a way that the lower bound on $lp(T)$ 
depends on the degree sequence or the diameter of the tree $T$.
Recall that, for an integer $n$ at least $2$, 
a sequence $s=(d_1,\ldots,d_n)$ of $n$ positive integers 
is the degree sequence of some tree if and only if
$d_1+\cdots+d_n=2(n-1)$.
Typically, for a given degree sequence $s=(d_1,\ldots,d_n)$ of some tree,
there are many non-isomorphic trees with degree sequence $s$.
Let 
$${\rm rad}(s)$$ 
denote the minimum radius of a tree with degree sequence $s$.

\pagebreak

We pose the following conjecture.

\begin{conjecture}\label{conjecture1}
If $s$ is the degree sequence of a tree $T$ with no vertex of degree $2$,
then 
$$lp(T)\geq {\rm rad}(s)-O(1).$$
\end{conjecture}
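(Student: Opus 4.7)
The plan is to try to extract $r - O(1)$ distinct leaf-to-leaf path lengths by analysing the structure around a longest path of $T$. Set $r = {\rm rad}(s)$. Since every tree with degree sequence $s$ has radius at least $r$, the diameter of $T$ is at least $2r-1$, and so there is a longest path $P = v_0 v_1 \cdots v_D$ in $T$ with $D \geq 2r - 1$. For each $i$ with $1 \leq i \leq D - 1$, the assumption that $T$ has no vertex of degree $2$ forces $v_i$ to have at least one neighbour off $P$, and hence a nonempty subtree $T_i$ of height $h_i \in [1, \min(i, D - i)]$, where the upper bound uses the maximality of $P$. Moreover the same assumption forces every internal vertex of $T_i$ other than $v_i$ to have degree at least $3$ in $T$, so $T_i$ is a branched tree whose internal structure is quite constrained.

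The first step is to consider the set $C_0$ of path lengths from $v_0$ to the other leaves of $T$: every leaf of $T_i$ at depth $a$ contributes $i + a$ to $C_0$, and $D$ always lies in $C_0$, giving $lp(T) \geq |C_0|$. A symmetric set $C_D$ from $v_D$ yields $lp(T) \geq |C_0 \cup C_D|$. The aim is then a gap argument: for each candidate common value $c$, bound the number of indices $i$ for which $c = i + a$ for some leaf depth $a$ in $T_i$, and combine this with the symmetric bound from $v_D$. Asking that both $i + a$ and $(D - i) + a$ lie in small sets $C_0$ and $C_D$ places algebraic constraints on the pairs $(i, a)$, and one can hope to show that only $O(1)$ indices $i$ can be served by a single combination $(c, c')$ with $c \in C_0$ and $c' \in C_D$.

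The hardest step is ruling out a ``dyadic'' adversary who concentrates leaves of $T_i$ at depth $2^k - i$ for every $i$ in a window $[2^{k-1}, 2^k - 1]$, which keeps $|C_0|$ of size only $O(\log r)$; this is essentially what produces the $\log_2(r)$ loss in the theorem stated in the abstract, and improving to $O(1)$ requires showing that such an adversary cannot simultaneously keep $C_D$ small. It is plausible that the joint constraint --- that the adversary realise two nearly-complementary dyadic patterns, one from each endpoint, while respecting the rigid branching forced inside every $T_i$ by the no-degree-$2$ assumption --- is infeasible, but I expect that making this rigorous will require a delicate induction, possibly invoking additional BFS sources beyond $v_0$ and $v_D$ to break the remaining symmetry.
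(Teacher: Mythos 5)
The statement you are addressing is Conjecture \ref{conjecture1}, which the authors explicitly leave open: the paper only establishes the weaker bound $lp(T)\geq {\rm rad}(s)-\log_2({\rm rad}(s))$ (Theorem \ref{theorem2}), and it does so by an entirely different route (rooting $T$ at a vertex of maximum degree, a pigeonhole argument on leaf depths, and a doubling inequality for the auxiliary quantity $h(s^+,k)$). A genuine proof of the $O(1)$ version would therefore be new. Your text, however, is a programme rather than a proof, and the gap sits exactly where you yourself locate it.

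Concretely, your set-up is sound: a longest path $P=v_0\cdots v_D$ with $D\geq 2\,{\rm rad}(s)-1$, pendant subtrees $T_i$ of height $h_i\leq\min(i,D-i)$, and the distance sets $C_0$, $C_D$ from the two endpoints, with $lp(T)\geq\max(|C_0|,|C_D|)$. It even yields something for free: for a leaf $u$ of $T_i$ at depth $a$, the pair $({\rm dist}(v_0,u),{\rm dist}(v_D,u))=(i+a,\,D-i+a)$ determines $(i,a)$, and since the no-degree-$2$ hypothesis forces every internal $v_i$ to carry a pendant subtree, at least $D-1$ distinct values of $i$ occur; hence $|C_0|\cdot|C_D|\geq D-1$ and $lp(T)=\Omega(\sqrt{{\rm rad}(s)})$. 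But that is all the ``joint constraint'' gives without a further idea, and it is weaker than the $\frac{1}{3}{\rm diam}(T)^{2/3}$ bound from \cite{brkama} quoted in the paper's conclusion. The step you call the hardest --- showing that a dyadic adversary cannot keep $C_0$ and $C_D$ simultaneously small --- is not a technical loose end; it is the entire content of the conjecture, and you offer only the assertion that its infeasibility is ``plausible'' and will require ``a delicate induction.'' In particular, while you note that the no-degree-$2$ condition forces each $T_i$ to branch, you extract no quantitative consequence from that branching (this is precisely where the paper's $h(s^+,k)$ machinery does its work in the weaker theorem), and nothing in the proposal excludes an adversary whose leaf depths make both $C_0$ and $C_D$ unions of $O(\log D)$ dyadic blocks. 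As written, the proposal establishes nothing beyond bounds already implied by the cited results, and the conjecture remains open.
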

Let $\Delta$ and $h$ be integers at least $3$.
If $T_{\Delta,h}$ is the tree with vertices of degrees $1$ and $\Delta$ only
in which every leaf has distance $h$ 
from some fixed root vertex $r$,
then $T_{\Delta,h}$ has $\ell=\Delta(\Delta-1)^{h-1}$ leaves.
As observed by Di Braccio et al.~\cite{brkama}, 
the trees $T_{\Delta,h}$ show that Theorem \ref{theorem1} is essentially tight,
because 
$$lp(T_{\Delta,h})=
|\{ 0,2,\ldots,2h\}|=
h+1=\left\lceil \log_{\Delta-1}((\Delta-2)\ell)\right\rceil.$$
It is easy to see that ${\rm rad}(s_{\Delta,h})=h$
for the degree sequence $s_{\Delta,h}$ of $T_{\Delta,h}$,
which means that Conjecture \ref{conjecture1}
would be best possible up to the additive constant.

If $s=(d_1,\ldots,d_n)$ is a given degree sequence of some tree
with ordered entries $d_1\geq \ldots \geq d_n$, 
then it is easy to see 
using simple exchange arguments
that ${\rm rad}(s)$ 
can be determined efficiently by a simple greedy construction
\begin{itemize}
\item creating a $0$th layer $L_0$ 
containing a center vertex $r$ of maximum degree $d_1$,
\item creating a 1st layer $L_1$ 
attaching to $r$ exactly $d_1$ neighbors of the largest possible degrees 
$d_{2},\ldots,d_{1+d_1},$
\item creating a 2nd layer $L_2$ 
attaching 
$\sum\limits_{u\in L_1}(d_T(u)-1)=(d_2-1)+\cdots+(d_{1+d_1}-1)$
vertices of the largest possible degrees 
$d_{2+d_1},\ldots,d_{1+d_1+(d_2-1)+\cdots+(d_{1+d_1}-1)}$
to the neighbors of $r$, 
\item and so forth.
\end{itemize}
The constructed tree has its vertices in layers $L_0,L_1,\ldots,L_{{\rm rad}(s)}$,
each containing all vertices at a certain distance from the root vertex
and this distance increases monotonically with the corresponding vertex degree.
The number of vertices in each layer 
depends on the degrees of the vertices in the preceding layer.
All leaves of the constructed tree 
have distance ${\rm rad}(s)$ or ${\rm rad}(s)-1$ from $r$.

If $s$ contains only entries $1$ and $\Delta\geq 3$,
then ${\rm rad}(s)$ is essentially $\log_{\Delta-1}(\ell)$,
where $\ell$ is the number of $1$ entries in $s$,
that is, Conjecture \ref{conjecture1} generalizes 
Theorem \ref{theorem1} in the desired way.
Our main result establishes Conjecture \ref{conjecture1}
up to a term of smaller order.

\begin{theorem}\label{theorem2}
If $s$ is the degree sequence of some tree $T$ with no vertex of degree $2$,
then 
$$lp(T)\geq {\rm rad}(s)-\log_2\left({\rm rad}(s)\right).$$
\end{theorem}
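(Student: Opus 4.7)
The plan is to analyse a longest path in $T$ together with the subtrees hanging off it. Write $h := \mathrm{rad}(s)$, so that $\mathrm{rad}(T) \geq h$, and let $D := \mathrm{diam}(T) \geq \mathrm{rad}(T) \geq h$. Fix a longest path $P = p_0 p_1 \cdots p_D$ in $T$; its endpoints $p_0, p_D$ are leaves. For each interior vertex $p_i$ (with $1 \leq i \leq D - 1$), the assumption that $T$ has no vertex of degree $2$ yields $\deg_T(p_i) \geq 3$, so a non-trivial subtree $T_i$ of $T$ hangs off $p_i$ with height $\sigma_i \geq 1$; the maximality of $P$ then forces $\sigma_i \leq \min(i, D - i)$.

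The natural collection of leaf-to-leaf path lengths to track is
\[
L_i := i + \sigma_i \quad \text{and} \quad R_i := (D - i) + \sigma_i \qquad (1 \leq i \leq D - 1),
\]
together with $D$ itself; these are lengths of paths from $p_0$ or $p_D$ to a deepest leaf of $T_i$. Setting $V := \{L_i\} \cup \{R_i\} \cup \{D\}$, we have $lp(T) \geq |V|$, and the theorem reduces to proving the estimate $|V| \geq h - \log_2 h$. The key rigidity is that the pairs $(L_i, R_i)_{i = 1}^{D - 1}$ are pairwise distinct, since $L_i - R_i = 2i - D$ depends injectively on $i$; hence $V - V$ contains a $(D - 1)$-term arithmetic progression of common difference $2$. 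Moreover, within each level set $\{i : L_i = c\}$ of the function $i \mapsto L_i$, the companion value $R_i = D + c - 2i$ runs through an arithmetic progression of length $\lfloor c/2 \rfloor$, so heavy collisions in $\{L_i\}$ force heavy spread in $\{R_i\}$ and vice versa.

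The main obstacle is the combinatorial accounting in this last step: starting from an arbitrary admissible sequence $(\sigma_i)$, one has to certify that no choice of collisions pushes $|V|$ below $h - \log_2 h$. A covering argument shows that partitioning $\{1, \ldots, D - 1\}$ into level sets of $L$ requires at least $\Theta(\log_2 D)$ "doubling" level values, which is precisely where the $\log_2 h$ loss arises. Balancing this against the spread of $\{R_i\}$, and then invoking the elementary inequality $D \geq 2\,\mathrm{rad}(T) - 1 \geq 2h - 1$, would yield the claimed bound $|V| \geq h - \log_2 h$ and hence the theorem.
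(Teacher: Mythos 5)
Your proposal does not follow the paper's route (which roots $T$, applies a pigeonhole argument to find $\ell/h$ leaves of equal depth, and then uses a halving lemma for the quantity $h(s^+,k)$, the minimum height of a tree with $k$ leaves whose out-degree sequence is a subsequence of $s^+$), and as outlined it has a genuine gap: the decisive combinatorial step is only asserted, and the quantitative skeleton you set up cannot reach the stated bound. Your only link between the longest path and $\mathrm{rad}(s)$ is the inequality $D\geq 2h-1$; after that you work purely with the admissible profiles $1\leq\sigma_i\leq\min(i,D-i)$. But there are admissible profiles for which $|V|$ is only about $D/4$. Concretely, for $D=2m$ set $\sigma_i=2^{j}-i$ for $i\in[2^{j-1},2^{j}-1]$, $j=1,\ldots,\log_2 m$, and mirror this for $i>m$: then $\{L_i\}$ collapses to the $\log_2 m$ values $\{2,4,8,\ldots,m\}$ while $\{R_i\}$ fills only the even values in $[m+2,2m]$, so $|V|\approx D/4+\log_2 D$. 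Since $h$ may be as large as $\lceil D/2\rceil$, a bound of the form $|V|\geq f(D)$ combined with $D\geq 2h-1$ can deliver at most roughly $h/2$, not $h-\log_2 h$. To rule out such profiles you would have to show that they force $\mathrm{rad}(s)$ to drop well below $D/2$, i.e., you would need a genuine argument about the degree sequence --- which is exactly the content of the paper's Lemmas on $h(s^+,k)$ and precisely what your sketch omits (``a covering argument shows\ldots'', ``would yield the claimed bound'').

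Two further inaccuracies: within a level set $\{i:L_i=c\}$ the values $R_i=D+c-2i$ form a \emph{subset} of an arithmetic progression of length $\lfloor c/2\rfloor$, not the whole progression (the level set can be a single index), so the ``collision forces spread'' trade-off gives only a $\max(t,(D-1)/t)\geq\sqrt{D-1}$ type bound as stated; and your set $V$ records only distances from $p_0,p_D$ to the \emph{deepest} leaf of each $T_i$, discarding the leaf-to-leaf paths between different hanging subtrees that any repaired version of this argument would likely need. By contrast, the paper's proof never touches the diameter: it observes that either the leaves of $(T,r)$ occupy $h+1$ distinct depths (done by a sumset argument at the lowest common ancestor), or some depth class has $k\geq\ell/h$ leaves, and then the inequality $h(s^+,k)\leq h(s^+,\lceil k/2\rceil)+1$ iterated $\log_2(\ell/k)\leq\log_2 h$ times converts the $k$ equal-depth leaves into $h-\log_2 h$ distinct path lengths.
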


\section{Proof of Theorem \ref{theorem2}}

For the proof of Theorem \ref{theorem2},
it is convenient to consider rooted trees.
For notational clarity, we consider rooted trees $(T,r)$ 
as so-called {\it out-trees}, 
where all edges of $T$ are directed away from the root $r$.
If $T$ has degree sequence $(d_1,d_2,\ldots,d_n)$,
where $d_1$ is the degree of $r$, 
then the rooted tree $(T,r)$ has out-degree sequence
$(d_1^+,d_2^+,\ldots,d_n^+)=(d_1,d_2-1,\ldots,d_n-1)$.
In fact, a sequence $s^+=(d_1^+,\ldots,d_n^+)$ of non-negative integers
is the out-degree sequence of some rooted (out-)tree $(T,r)$
if and only if $d_1^++\cdots +d_n^+=n-1$.
A leaf in a rooted tree is a vertex without children.
The depth of a vertex in a rooted tree is the distance from the root to that vertex
and the height of a rooted tree is the maximum depth of a vertex.
For a rooted tree $(T,r)$ and a vertex $u$ in $T$,
the {\it rooted subtree $(T_u,u)$ of $(T,r)$ at $u$}
contains $u$ as its root as well as all descendants of $u$ in $(T,r)$.

For an out-degree sequence $s^+$ of some rooted tree, let 
$${\rm h}(s^+)$$ 
denote the minimum height of a rooted (out-)tree with out-degree sequence $s^+$.
Given $s^+$, it is easy to see that ${\rm h}(s^+)$ can be determined efficiently 
by a similar greedy construction as explained above for ${\rm rad}(s)$.
Furthermore, it is easy to see that
\begin{eqnarray}\label{e1}
{\rm h}(d_1,d_2-1,\ldots,d_n-1)={\rm rad}(d_1,d_2,\ldots,d_n)
\end{eqnarray}
for every degree sequence $s=(d_1,d_2,\ldots,d_n)$
with ordered entries $d_1\geq \ldots \geq d_n$.

For a rooted tree $(T,r)$, let $lp(T,r)=lp(T)$, 
where $T$ is the underlying undirected tree,
that is, 
we ignore the orientations for the paths lengths counted by $lp(T,r)$.

\begin{lemma}\label{lemma1}
If $(T,r)$ is a rooted tree with leaves of $k$ different depths, then 
$lp(T,r)\geq k$.
\end{lemma}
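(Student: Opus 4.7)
The plan is to prove Lemma~\ref{lemma1} by induction on $|V(T)|$. The base case $|V(T)|=1$ is immediate: $T$ is a single leaf at depth $0$, so $k=1$ and the trivial $0$-length path yields $lp(T,r)\ge 1$. For the inductive step I take $v$ to be a leaf at maximum depth $d_k$ and consider $T':=T-v$. A key structural observation is that if the parent $p$ of $v$ has any child $w\neq v$, then $w$ must itself be a leaf at depth $d_k$: otherwise $w$ would have descendants at depth strictly greater than $d_k$, contradicting the maximality of $d_k$. In particular, if $v$ is the \emph{unique} leaf at depth $d_k$, then $p$ has $v$ as its only child, and in $T'$ the vertex $p$ becomes a new leaf at depth $d_k-1$.

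According to how many distinct leaf depths survive in $T'$, I split into two cases. In the easy case, $T'$ still has at least $k$ distinct leaf depths; this happens whenever another leaf of $T$ sits at depth $d_k$, or $v$ is unique at $d_k$ but $d_k-1\notin\{d_1,\ldots,d_{k-1}\}$. Applying the inductive hypothesis to $T'$ gives $lp(T')\ge k$, and since every leaf-to-leaf path in $T'$ is also one in $T$, I obtain $lp(T,r)\ge k$. In the remaining tight case, $T'$ has exactly $k-1$ distinct leaf depths, which forces $v$ to be unique at $d_k$, $p$ to have $v$ as its only child, and $d_{k-1}=d_k-1$. Here the inductive hypothesis yields only $lp(T')\ge k-1$, so I must exhibit one extra leaf-to-leaf path length in $T$.

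The crucial identity for the tight case is that every path in $T$ from $v$ to another leaf $\ell$ traverses $p$, so its length equals $1$ plus the length of the path from $p$ to $\ell$ in $T'$. Writing $S$ for the set of lengths of $p$-to-leaf paths in $T'$ and $P$ for the set of leaf-to-leaf path lengths in $T'$ among leaves other than $p$, this gives $LP(T)=\{0\}\cup(1+(S\setminus\{0\}))\cup P$, whereas $LP(T')=S\cup P$. To produce the extra length I would take $\ell$ to realize the eccentricity of $p$ in $T'$: if $p$ is a diametral endpoint of $T'$, the resulting $v$-$\ell$ length exceeds $\mathrm{diam}(T')$ and is therefore new; otherwise a finer structural argument exploiting the second leaf at depth $d_{k-1}$ in $T'$ yields the required new length.

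The main obstacle is completing this tight case in full generality: the inductive hypothesis falls short by exactly one, and recovering the missing length requires a careful analysis of the shift-by-one interaction between $S$, $1+S$, and $P$. In particular, one must rule out the pathological scenario in which every element of $1+(S\setminus\{0\})$ lies in $P\cup\{0\}$ while some element of $S\setminus\{0\}$ does not, since this would allow $|LP(T)|<|LP(T')|$. I expect that the structural rigidity of the tight case---two leaves at depth $d_{k-1}$ and the pendant chain of internal vertices ending at $v$---forbids this scenario, allowing the induction to go through.
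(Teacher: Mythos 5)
There is a genuine gap, and in fact two. The central one you acknowledge yourself: in the ``tight case'' of your induction (where deleting the unique deepest leaf $v$ drops the number of distinct leaf depths by exactly one), the inductive hypothesis delivers only $k-1$ lengths and you do not actually produce the missing one. Your candidate -- take $\ell$ realizing the eccentricity of $p$ in $T'$ and use the $v$--$\ell$ path of length $1+\max S$ -- does not work in general, because $P$ can contain lengths up to $2\max S$ (a path between two leaves on opposite sides of $p$), so $1+\max S$ need not be new; and the ``finer structural argument'' you defer to is exactly the content of the lemma, not a routine verification. A second, unacknowledged flaw sits in your ``easy case'': the assertion that every leaf-to-leaf path in $T'$ is also one in $T$ is false whenever $p$ becomes a new leaf of $T'$ (which happens throughout your second subcase, since uniqueness of $v$ at depth $d_k$ forces $p$ to have $v$ as its only child, and can also happen in the first subcase). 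Paths of $T'$ with endpoint $p$ are not leaf-to-leaf paths of $T$, so the inductive bound $lp(T')\geq k$ cannot simply be transferred; you run into the same shift-by-one collapse there as in the tight case.

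The paper sidesteps all of this with a direct, non-inductive argument that you may want to adopt: pick leaves $u_1,\ldots,u_k$ of the $k$ distinct depths, let $v$ be their lowest common ancestor, and split the $k$ pairwise distinct distances $\mathrm{dist}_T(v,u_i)$ into two nonempty sets $A$ and $B$ according to whether $u_i$ lies in the subtree of one fixed child $w$ of $v$ or not. Every sum $a+b$ with $a\in A$, $b\in B$ is the length of a leaf-to-leaf path through $v$, and the elementary sumset bound $|A+B|\geq |A|+|B|-1=k-1$ (witnessed by the chain $a_1+b_1<\cdots<a_1+b_d<a_2+b_d<\cdots<a_c+b_d$), together with the trivial path of length $0$, gives $lp(T,r)\geq k$ immediately. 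The key insight your approach is missing is that one should exploit all $k$ depths simultaneously through a common branch vertex, rather than peel off one leaf at a time and fight the resulting boundary effects.
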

\begin{proof}
Since the statement is trivial for $k=1$, we may assume $k\geq 2$.
Let $u_1,\ldots,u_k$ be leaves of different depths in $(T,r)$.
Let $v$ be the lowest common ancestor of $u_1,\ldots,u_k$ in $(T,r)$.
Since $k\geq 2$, the vertex $v$ has at least $2$ children.
Let $w$ be a child of $v$ such that some $u_i$ is in $V(T_w)$.
Let 
\begin{eqnarray*}
A&=&\Big\{ {\rm dist}_T(v,u_i):i\in [k]\mbox{ and }u_i\in V(T_w)\Big\}\mbox{ and }\\
B&=&\Big\{ {\rm dist}_T(v,u_i):i\in [k]\mbox{ and }u_i\in V(T_v)\setminus V(T_w)\Big\}.
\end{eqnarray*}
Clearly, the sets $A$ and $B$ are non-empty and $|A\cup B|=k$.
Let 
$A=\{ a_1,\ldots,a_c\}$ with $a_1<\ldots<a_c$
and 
$B=\{ b_1,\ldots,b_d\}$ with $b_1<\ldots<b_d$.
Considering leaf to leaf paths over $v$ as well as 
one trivial leaf to leaf path of length $0$, we obtain
$lp(T,r)\geq |A+B|+1$.
Since
$A+B$ contains at least $c+d-1=k-1$ distinct sums
$$a_1+b_1<a_1+b_2<\ldots<a_1+b_d<a_2+b_d<\ldots<a_c+b_d,$$
the statement follows.
\end{proof}

Let 
$s^+=(d_1^+,\ldots,d_n^+)$ 
be the out-degree sequence of some rooted tree $(T,r)$.
Let $\ell$ be the number of $0$ entries of $s^+$,
which equals the number of leaves of $(T,r)$.
For some $k\in [\ell]$, let 
$$h(s^+,k)$$
be the minimum height of a rooted tree $(\hat{T},\hat{r})$ 
with out-degree sequence $\hat{s}^+$ such that
\begin{itemize}
\item $(\hat{T},\hat{r})$ has at least $k$ leaves and
\item $\hat{s}^+$ is a subsequence of $s^+$,
that is, the sequence $\hat{s}^+$ arises from $s^+$ by removing entries.
\end{itemize}

\pagebreak

\begin{lemma}\label{lemma2}
Given $s^+$ and $k$ as above.
\begin{enumerate}[(i)]
\item $h(s^+,k)$ is well-defined and increases monotonically in $k$.
\item $h(s^+,k)$ can be determined efficiently.
\item $h(s^+,k)\leq h\left(s^+,\left\lceil\frac{k}{2}\right\rceil\right)+1$.
\end{enumerate}
\end{lemma}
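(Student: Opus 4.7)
Parts (i) and (ii) are routine. For (i), the sequence $s^+$ itself is trivially a subsequence of $s^+$ and the corresponding rooted tree has exactly $\ell\geq k$ leaves, so the defining minimum is taken over a non-empty set; monotonicity holds because any tree with at least $k_2$ leaves is also a tree with at least $k_1$ leaves whenever $k_1\leq k_2$. For (ii), I would adapt the greedy layered construction sketched in the introduction for ${\rm rad}(s)$: for each candidate height $h=0,1,\ldots$, greedily build a rooted tree by sorting $s^+$ non-increasingly, placing positive entries level by level starting from the largest so as to maximise capacity, and finally filling with zero entries to obtain a valid tree of height at most $h$; this yields, in polynomial time, the maximum number of leaves achievable at height at most $h$, and $h(s^+,k)$ is then the smallest $h$ for which this maximum is at least $k$.

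For (iii), the plan is to fix a rooted tree $(T',r')$ realising $h(s^+,\lceil k/2\rceil)$, with height $h:=h(s^+,\lceil k/2\rceil)$, out-degree sequence $\hat{s}^{\prime+}\subseteq s^+$, and $\ell'$ leaves, where $\ell'\geq \lceil k/2\rceil$. If $\ell'\geq k$ then $(T',r')$ itself gives the bound; otherwise $\ell'<k\leq \ell$, and I will build $(T,r)$ of height at most $h+1$ with at least $k$ leaves by \emph{expanding} leaves of $T'$ using entries from $U:=s^+\setminus\hat{s}^{\prime+}$. Writing $U_{\geq 2}$ for the positive entries of $U$ of value at least $2$, set $a=|U_{\geq 2}|$ and $A=\sum_{e\in U_{\geq 2}}e$; expanding a leaf $v$ of $T'$ with value $e\in U_{\geq 2}$ turns $v$ into an internal vertex of out-degree $e$ and attaches $e$ new leaf children at depth at most $h+1$, consuming one entry of value $e$ from $U$ together with $e$ zero entries and increasing the leaf count by $e-1$. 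The key identity $A-a=\ell-\ell'$ follows by comparing the tree-sum identity $\sum_{e\in s^+}e=n-1$ with the analogous identity for $\hat{s}^{\prime+}$, and by noting that entries of $U$ equal to $1$ contribute nothing to the gain.

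Given this setup, I distinguish two cases. If $a\leq \ell'$, I would expand $a$ leaves of $T'$ using all of $U_{\geq 2}$, producing a tree with $(\ell'-a)+A=\ell\geq k$ leaves. If $a>\ell'$, I would expand all $\ell'$ leaves using the $\ell'$ largest entries of $U_{\geq 2}$; since each chosen value is at least $2$, the resulting leaf count equals the sum of the chosen values and is at least $2\ell'\geq k$. The main obstacle I expect is verifying that the new out-degree sequence $\hat{s}^+$ is a subsequence of $s^+$, which reduces to checking that we do not need more zero entries than $s^+$ provides: in the first case exactly $\ell$ zeros are used, and in the second case, using $A=\ell-\ell'+a$ together with the fact that each of the $a-\ell'$ smallest entries of $U_{\geq 2}$ is at least $2$, the number of zeros required is at most $A-2(a-\ell')=\ell+\ell'-a<\ell$. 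Together with the obvious height bound $h+1$, this will give $h(s^+,k)\leq h+1=h(s^+,\lceil k/2\rceil)+1$, completing (iii).
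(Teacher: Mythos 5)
Your proof is correct, and while (i) and (ii) follow the paper's route (feasibility of $(T,r)$ itself for well-definedness, plus a greedy/exchange argument for computability), your argument for (iii) takes a genuinely different path. The paper first invokes the normal form established in (ii): with $d_1^+\geq\ldots\geq d_n^+$ it takes the minimal prefix lengths $p$ and $q$ realizing $h(s^+,k)$ and $h(s^+,\lceil k/2\rceil)$, shows by minimality of $p$ that the leaf count $k'$ of the smaller tree satisfies $k'\geq p-q$ (otherwise the prefix of length $q+k'$ would already give at least $2k'\geq k$ leaves), and then attaches $d_{q+1}^+,\ldots,d_p^+$ to $p-q$ of its leaves. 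You instead start from an arbitrary optimal tree $(T',r')$ for $\lceil k/2\rceil$ and graft on the leftover entries of value at least $2$, controlling the bookkeeping via the identity $A-a=\ell-\ell'$; this identity is correct, since the positive entries of any out-degree sequence with $\ell$ zero entries have $(e-1)$-sum equal to $\ell-1$ and the $1$-entries drop out. Your case $a>\ell'$, settled by the doubling bound $2\ell'\geq 2\lceil k/2\rceil\geq k$, is precisely the situation the paper rules out through its minimal choice of $p$; both arguments ultimately rest on the same observation that attaching an entry of value at least $2$ to a leaf at least doubles that leaf's contribution, which is why halving $k$ costs only one unit of height. Your zero-count estimates (exactly $\ell$ zeros in the first case, at most $A-2(a-\ell')=\ell+\ell'-a<\ell$ in the second) check out, so the grafted sequence is indeed a subsequence of $s^+$. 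The paper's version is shorter because the canonical prefix form makes the subsequence condition and the leaf count automatic; yours is somewhat more robust in that it never requires the optimizer to have that canonical shape.
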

\begin{proof} (i)
Since $(T,r)$ has $\ell$ leaves and $\ell\geq k$, 
the rooted tree $(T,r)$ is a feasible choice for $(\hat{T},\hat{r})$,
which implies that $h(s^+,k)$ is well-defined.
The monotonicity follows immediately from the definition.

\medskip\noindent (ii) We may assume that $d_1^+\geq \ldots \geq d_n^+$.
The definition of $h(s^+,k)$ and simple exchange arguments 
imply that the non-zero entries of $\hat{s}^+$
can be chosen as an initial segment of $s^+$.
More precisely, there is some $p\in [n]$ with
$$\hat{s}^+=(d_1^+,\ldots,d_p^+,\underbrace{0,\ldots,0}_{\geq k}).$$
Since $\hat{s}^+$ is the out-degree sequence of some rooted tree,
the number of $0$ entries in $\hat{s}^+$
is exactly $d_1^++\cdots +d_p^+-(p-1)$,
which implies that $p$ can be chosen within $[n]$ 
as the smallest value 
for which $d_1^++\cdots +d_p^+-(p-1)\geq k$.
In particular, this choice implies that $d_p^+\geq 2$,
that is, the sequence $\hat{s}^+$ contains no $1$ entry.
Clearly, for this specific choice of $\hat{s}^+$,
we have $h(s^+,k)=h(\hat{s}^+)$,
which completes the proof of (ii).

\medskip\noindent (iii) 
We exploit the above observations.
Again, let $d_1^+\geq \ldots \geq d_n^+$.
Let $p,q\in [n]$ be smallest such that 
\begin{eqnarray*}
d_1^++\cdots +d_p^+-(p-1)&\geq& k\mbox{ and }\\
k':=d_1^++\cdots +d_q^+-(q-1)&\geq& \left\lceil\frac{k}{2}\right\rceil.
\end{eqnarray*}
Clearly, we have $p\geq q$ and $d^+_p\geq 2$.

If $k'\geq k$, then 
$$
h(s^+,k)
=h(d_1^+,\ldots,d_q^+,\underbrace{0,\ldots,0}_{=k'\geq k})
=h\left(s^+,\left\lceil\frac{k}{2}\right\rceil\right).
$$
Now, let $k'<k$.
If $k'<p-q$, then 
\begin{eqnarray*}
d_1^++\cdots +d_{q+k'}^+-(q+k'-1)
& = & 
d_1^++\cdots +d_q^+-(q-1)
+\underbrace{(d_{q+1}^+-1)}_{\geq 1}+\cdots+\underbrace{(d_{q+k'}^+-1)}_{\geq 1}
\geq 2k'\geq k,
\end{eqnarray*}
contradicting the choice of $p$.
Hence, we obtain $k'\geq p-q$.
Let $(\hat{T}',\hat{r})$ be a rooted tree of height 
$h\left(s^+,\left\lceil\frac{k}{2}\right\rceil\right)$
with $k'$ leaves and out-degree sequence 
$$(d_1^+,\ldots,d_q^+,\underbrace{0,\ldots,0}_{=k'\geq \lceil k/2\rceil}).$$
If $(\hat{T},\hat{r})$ arises from $(\hat{T}',\hat{r})$ 
by selecting $p-q$ leaves of $(\hat{T}',\hat{r})$ and
attaching to these leaves $d_{q+1}^+,\ldots,d_p^+$ new leaves, respectively,
then $(\hat{T},\hat{r})$ is of height at most
$h\left(s^+,\left\lceil\frac{k}{2}\right\rceil\right)+1$,
has $k'+d_{q+1}^++\cdots +d_p^+-(p-q)\geq k$ leaves and out-degree sequence 
$(d_1^+,\ldots,d_p^+,0,\ldots,0)$, which implies 
$h(s^+,k)\leq h\left(s^+,\left\lceil\frac{k}{2}\right\rceil\right)+1$.
\end{proof}

\begin{lemma}\label{lemma3}
If $(T,r)$ is a rooted tree with out-degree sequence $s^+$
and $U=\{ u_1,\ldots,u_k\}$ is a set of $k$ leaves of $(T,r)$ 
that are all of equal depth, 
then there are leaf to leaf paths between vertices in $U$
of at least $h(s^+,k)+1$ different lengths.
In particular, $lp(T,r)\geq h(s^+,k)+1$.
\end{lemma}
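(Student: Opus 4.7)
The plan is to avoid any induction and instead compare the contracted Steiner tree of $U$ inside $(T,r)$ to an extremal tree realizing $h(s^+,k)$.

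First, I would let $v$ be the lowest common ancestor of $U$ in $(T,r)$ and form the Steiner tree of $U$ in $T$ (the minimal subtree of $T$ containing $U$, rooted at $v$). Contracting every internal vertex of Steiner-out-degree $1$ yields a rooted out-tree $S$ with root $v$, leaf set $U$, and internal vertices $w_1,\ldots,w_q$ of Steiner-out-degree at least $2$. Since $\{w_1,\ldots,w_q\}$ is exactly the set of $T$-lowest common ancestors of pairs of distinct vertices of $U$, and $U$ has common depth $d$ in $(T,r)$, the nonzero distances between pairs of vertices in $U$ form exactly the set $\{2(d-{\rm dist}_T(r,w_i)):i\in[q]\}$; together with the trivial path of length $0$, the number of distinct leaf-to-leaf path lengths between vertices of $U$ equals $|D|+1$, where $D=\{{\rm dist}_T(r,w_i):i\in[q]\}$. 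So it suffices to prove $|D|\geq h(s^+,k)$.

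The easy direction is $|D|\geq {\rm height}(S)$: along any root-to-leaf path in $S$ the internal vertices are $T$-ancestors of each other, so their $T$-depths are pairwise distinct, and a longest such path contributes ${\rm height}(S)$ distinct elements to $D$.

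The hard half, and the main obstacle, is $h(s^+,k)\leq {\rm height}(S)$. For this I would inflate $S$ into an out-tree $S'$ by attaching, to each $w_i$, enough new leaves so that the out-degree of $w_i$ in $S'$ coincides with the out-degree of $w_i$ in $T$. Then ${\rm height}(S')={\rm height}(S)$ and the multiset of positive out-degrees of $S'$ is exactly the multiset of $T$-out-degrees of $w_1,\ldots,w_q$, which is a sub-multiset of the positive entries of $s^+$ because the $w_i$ are distinct vertices of $T$. The delicate point is the zero entries: every non-Steiner $T$-child of any $w_i$ roots a subtree of $T$ that contains no vertex of $U$ but still has at least one leaf of $T$, and these subtrees are pairwise vertex-disjoint as $i$ ranges over $[q]$. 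Hence the total number of newly attached leaves is at most $\ell_T-k$, where $\ell_T$ is the number of leaves of $T$; so $S'$ has at most $\ell_T$ leaves, which matches the number of zero entries in $s^+$. It follows that the out-degree sequence of $S'$ is a subsequence of $s^+$, and since $S'$ has at least $k$ leaves and height ${\rm height}(S)$, we conclude $h(s^+,k)\leq {\rm height}(S)$. Chaining both estimates yields $|D|\geq h(s^+,k)$, so the number of distinct path lengths between vertices of $U$ is $|D|+1\geq h(s^+,k)+1$; the ``in particular'' statement is immediate since $U$-to-$U$ paths are leaf-to-leaf paths in $T$.
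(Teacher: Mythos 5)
Your proof is correct, and it takes a genuinely different route from the paper's. The paper proceeds by induction on $k$: it passes to the lowest common ancestor $v$ of $U$, splits $U$ among the subtrees rooted at the children of $v$, shows $h(s^+,k)\leq \max_i h(s_i^+,k_i)+1$ by reassembling optimal witness trees for the parts, applies the induction hypothesis inside the child subtree attaining the maximum, and gains one further, strictly longer length from a path crossing between two different child subtrees (this is the only place the equal-depth hypothesis enters). You avoid induction entirely: you describe the occurring lengths explicitly as $2(d-\delta)$ for $\delta$ in the set $D$ of $T$-depths of the branch vertices of the Steiner tree of $U$, and you certify $|D|\geq{\rm height}(S)\geq h(s^+,k)$ by inflating the contracted Steiner tree $S$ into a feasible tree for the minimization defining $h(s^+,k)$. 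The one delicate point in your approach --- that the inflated tree $S'$ must not contain more zero entries than $s^+$ does --- you handle correctly by charging each newly attached leaf to a $T$-leaf outside $U$ lying in one of a pairwise disjoint family of subtrees rooted at non-Steiner children of the branch vertices; the positive entries are unproblematic since the $w_i$ are distinct internal vertices of $T$. Your argument buys a somewhat stronger and more explicit conclusion (the bound $|D|+1\geq {\rm height}(S)+1$ together with a description of which lengths are realized), at the price of this zero-entry bookkeeping, which the paper's induction sidesteps because its witness trees are assembled only from witnesses already certified at the previous step.
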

\begin{proof}
The proof is by induction on $k$.
For $k=1$, the trivial leaf to leaf path $u_1$ of length $0$ implies $lp(T,r)=1$.
Since $h(s^+,1)=0$, we have the desired inequality.
Now, let $k\geq 2$.
Let $v$ be the lowest common ancestor of $u_1,\ldots,u_k$.
Since $k\geq 2$, the vertex $v$ is not a leaf.
Let $d=d_T^+(v)$ and let $w_1,\ldots,w_d$ be the children of $v$.
For $i\in [d]$, let $(T_i,w_i)$ be the rooted subtree of $(T,r)$ at $w_i$,
let $s_i^+$ be the out-degree sequence of $(T_i,w_i)$,  
let $U_i=\{ u_1,\ldots,u_k\}\cap V(T_i)$, and 
let $k_i=|U_i|$.
Clearly, we have $k_1+\cdots+k_d=k$ and $k_i<k$ for every $i\in [d]$.
By reordering the children of $v$,
we may assume that $k_1,\ldots,k_{d'}\geq 1$
and $k_{d'+1}=\ldots=k_d=0$ 
for some $d'\in [d]\setminus \{ 1\}$.

If, for every $i\in [d']$, 
the rooted tree $(\hat{T}_i,w_i)$ 
of height $h(s_i^+,k_i)$
has at least $k_i$ leaves
and an out-degree sequence that is a subsequence of $s_i^+$,
then the rooted tree $(\hat{T},v)$ that arises 
\begin{itemize}
\item from the disjoint union of 
$v$,
$(\hat{T}_1,w_1),\ldots,(\hat{T}_{d'},w_{d'})$, and
a set $X$ of $d-d'$ further vertices
\item by adding the oriented edges 
$(v,w_1),\ldots,(v,w_{d'})$ as well as 
the oriented edges from $v$ to each element of $X$
\end{itemize}
has height
$\max\Big\{ h(s_i^+,k_i):i\in [d']\Big\}+1,$
at least $k$ leaves, and 
an out-degree sequence that is a subsequence of $s^+$.
This implies 
$$h(s^+,k)\leq \max\Big\{ h(s_i^+,k_i):i\in [d']\Big\}+1.$$
By symmetry, we may assume that $h(s^+,k)\leq h(s_1^+,k_1)+1$.
By induction,
the rooted tree $(T_1,w_1)$ contains leaf to leaf paths 
between the vertices in $U_1$ 
of $h(s_1^+,k_1)+1$ different lengths.
Since all leaves in $U$ have the same depth in $(T,r)$,
a leaf to leaf path between a vertex in $U_1$ and a vertex in $U_2$
is strictly longer than each of these paths, and we obtain 
leaf to leaf paths between vertices in $U$
of at least $h(s_1^+,k_1)+2\geq h(s^+,k)+1$
different lengths, 
which completes the proof.
\end{proof}
We are now in a position to prove Theorem \ref{theorem2}.

\begin{proof}[Proof of Theorem \ref{theorem2}]
Let $s$ and $T$ be as in the statement.
Let $s=(d_1,\ldots,d_n)$ with $d_1\geq \ldots \geq d_n$.
Rooting $T$ at a vertex $r$ of degree $d_1$ yields a rooted tree $(T,r)$
with out-degree sequence $s^+=(d_1,d_2-1,\ldots,d_n-1)$.
Let $h=h(s^+)$.
By (\ref{e1}), it suffices to show $lp(T,r)\geq h-\log_2(h)$.
Since $s$ has no $2$ entry, the sequence $s^+$ has no $1$ entry,
which easily implies $h(s^+)=h(s^+,\ell)$. 
Let the number of $0$ entries in $s^+$ be $\ell$, that is, 
the tree $T$ and the rooted tree $(T,r)$ both have exactly $\ell$ leaves. 

If $(T,r)$ has leaves of $h+1$ different depths, 
then Lemma \ref{lemma1} implies $lp(T,r)\geq h+1$.
Hence, we may assume that $(T,r)$ 
has leaves of at most $h$ different depths only.
By the pigeonhole principle, this implies that $(T,r)$
has $k\geq \frac{\ell}{h}$ leaves that are all of equal depth and 
Lemma \ref{lemma3} implies $lp(T,r)\geq h(s^+,k)+1$.
For $p=\left\lfloor\log_2\left(\frac{\ell}{k}\right)\right\rfloor$,
we obtain $\left\lceil\frac{\ell}{2}\right\rceil\leq 2^pk \leq \ell$,
and Lemma \ref{lemma2} (i) and (iii) imply
$$
h
=h(s^+)
=h(s^+,\ell)
\stackrel{(iii)}{\leq} h\left(s^+,\left\lceil\frac{\ell}{2}\right\rceil\right)+1
\stackrel{(i)}{\leq} h\left(s^+,2^pk\right)+1
\stackrel{(iii)}{\leq} h(s^+,k)+p+1$$
and, hence,
$$lp(T,r)\geq h(s^+,k)+1\geq h-p\geq h-\log_2\left(\frac{\ell}{k}\right)\geq h-\log_2(h),$$
which completes the proof.
\end{proof}

\section{Conclusion}

While it is natural to exclude vertices of degree $2$ in this context,
there is a version of Conjecture \ref{conjecture1} including them:
If $s=(d_1,\ldots,d_n)$ is the degree sequence of a tree $T$
and $s'=(d_1,\ldots,d_{n'})$ 
arises from $s$ by removing all entries equal to $2$,
then $s'$ is still the degree sequence of a tree and we conjecture 
$lp(T)\geq {\rm rad}(s')-O(1).$
Next to Conjecture \ref{conjecture1}, we pose the problem to determine 
$f(D)=\inf\{ lp(T):T\in {\cal T}(D)\},$
where ${\cal T}(D)$ is the set of all trees $T$ with no vertex of degree $2$
and diameter ${\rm diam}(T)$ equal to $D$.
Results in~\cite{brkama} imply
$lp(T)\geq \frac{{\rm diam}(T)^{2/3}}{3}$
for a tree $T$ with no vertex of degree $2$,
which implies $f(D)\geq \frac{D^{2/3}}{3}$.
We believe $f(D)=o(D)$.

We conclude with an observation related to Kraft's inequality:
If $T$ is a rooted binary tree with $\ell$ leaves
and ${\cal W}$ is the multiset of all ${\ell\choose 2}$
path lengths of non-trivial leaf to leaf paths in $T$,
then a simple inductive argument shows 
$\sum\limits_{w\in {\cal W}}2^{-w}\leq \frac{\ell-1}{4}$
with equality if and only if $T$ is a full binary tree.

\end{document}